\DeclareMathSymbol{\Z}{\mathbin}{AMSb}{"5A}
\DeclareMathSymbol{\N}{\mathbin}{AMSb}{"4E}
\DeclareMathSymbol{\Q}{\mathbin}{AMSb}{"51}
\newcommand{\bi}{\begin{itemize}}
\newcommand{\ei}{\end{itemize}}
\newtheorem{theorem}{Theorem}[section]
\newtheorem{fact}[theorem]{Fact}
\newtheorem{claim}[theorem]{Claim}
\newtheorem{corollary}[theorem]{Corollary}
\newtheorem{proposition}[theorem]{Proposition}
\newtheorem{definition}[theorem]{Definition}
\newtheorem{remark}[theorem]{Remark}
\newtheorem{question}[theorem]{Question}
\newtheorem{problem}[theorem]{Problem}
\newtheorem{example}[theorem]{Example}
\newtheorem{thm}{Theorem}[section]
\newtheorem{prop}[thm]{Proposition}
\newcommand{\be}{\begin{enumerate}}
\newcommand{\ee}{\end{enumerate}}
\def\Ind#1#2{#1\setbox0=\hbox{$#1x$}\kern\wd0\hbox to 0pt{\hss$#1\mid$\hss}
\lower.9\ht0\hbox to 0pt{\hss$#1\smile$\hss}\kern\wd0}
\def\Notind#1#2{#1\setbox0=\hbox{$#1x$}\kern\wd0\hbox to 0pt{\mathchardef
\nn=12854\hss$#1\nn$\kern1.4\wd0\hss}\hbox to
0pt{\hss$#1\mid$\hss}\lower.9\ht0 \hbox to
0pt{\hss$#1\smile$\hss}\kern\wd0}
\title{Some remarks on inp-minimal and finite burden groups}
\author{Jan Dobrowolski
\footnote{The first author was supported by  Samsung Science Technology Foundation
under Project Number
SSTF-BA1301-03, and by European Union's Horizon 2020 research
and innovation programme under the Marie Sklodowska-Curie grant agreement No 705410} 
and John Goodrick}
\date{}
\begin{document}

\setcounter{section}{-1}
\maketitle
\begin{abstract}
We prove that any left-ordered inp-minimal group is abelian and we provide an example of a non-abelian left-ordered group of dp-rank 2. Furthermore, we establish a necessary condition for group to have finite burden involving normalizers of definable sets, reminiscent of other chain conditions for stable groups.
\end{abstract}

\section{Introduction and preliminaries}
One of the model-theoretic properties that gained a lot of interest recently is dp-minimality, which, on one hand, significantly strengthens NIP, and on the other hand, is satisfied by all strongly minimal theories, all (weakly) o-minimal theories, algebraically 
closed valued fields (more generally, by all C-minimal structures), and the valued field of p-adics.
Several interesting results were obtained for dp-minimal structures in the algebraic contexts
of groups and fields (sometimes with additional structure), see for example \cite{2,3,4,5}.


Throughout this note, we work in the context of a complete first-order theory $T$, and ``formula'' means a first-order formula in the language of $T$.

We recall some key definitions, which are originally due to Shelah \cite{Shelah}, though the precise form of the definitions which we give below seems to come from Usvyatsov \cite{Usvyatsov}.

\begin{definition}
\begin{enumerate}
\item An \emph{inp-pattern of depth $\kappa$ (in the partial type $\pi(\overline{x})$)} is a sequence $\langle \varphi_i(\overline{x}; \overline{y}_i) : i < \kappa \rangle$ of formulas and an array $\{ \overline{a}_{ij} : i < \kappa, j < \omega \}$ of parameters (from some model of $T$) such that:
\begin{enumerate}
\item For each $i < \kappa$, there is some $k_i < \omega$ such that $\{\varphi_{i,j}(\overline{x}; \overline{a}_{i, j}) : j < \omega \}$ is $k_i$-inconsistent; and
\item For each $\eta : \kappa \rightarrow \omega$, the partial type $$\pi(\overline{x}) \cup \{\varphi_i(\overline{x}; \overline{a}_{i, \eta(i)}) : i < \kappa \}$$ is consistent.
\end{enumerate}

\item The \emph{inp-rank} (or \emph{burden}) of a partial type $\pi(\overline{x})$ is the maximal $\kappa$ such that there is an inp-pattern of depth $\kappa$ in $\pi(\overline{x})$, if such a maximum exists. In case there are inp-patterns of depth $\lambda$ in $\pi(\overline{x})$ for every cardinal $\lambda < \kappa$ but no inp-pattern of depth $\kappa$, we say that the inp-rank of $\pi(\overline{x})$ is $\kappa_-$.

\item The inp-rank of $T$ is the inp-rank of $x=x$, and $T$ is \emph{inp-minimal} if its inp-rank is $1$.

\item An \emph{ict-pattern of depth $\kappa$ (in the partial type $\pi(\overline{x})$)} is a sequence $\langle \varphi_i(\overline{x}; \overline{y}_i) : i < \kappa \rangle$ of formulas and an array $\{ \overline{a}_{ij} : i < \kappa, j < \omega \}$ of parameters (from some model of $T$) such that for each $\eta : \kappa \rightarrow \omega$, the partial type $$\pi(\overline{x}) \cup \{\varphi_i(\overline{x}; \overline{a}_{i, \eta(i)}) : i < \kappa \} \cup \{\neg \varphi_i(\overline{x}; \overline{a}_{i,j}) : i < \kappa, j \neq \eta(i) \}$$ is consistent.

\item The \emph{dp-rank} of a partial type $\pi(\overline{x})$ is the maximal $\kappa$ such that there is an ict-pattern of depth $\kappa$ in $\pi(\overline{x})$, if such a maximum exists (if not, it is ``$\kappa_-$'' exactly as in 2 above). The dp-rank of $T$ is the dp-rank of $x=x$, and $T$ is \emph{dp-minimal} if its dp-rank is $1$.
\end{enumerate}

\end{definition}

In spite of its name, dp-rank is really more like a cardinal-valued \emph{dimension} than an ordinal-valued rank such as $SU(p)$, and in the context of stable theories, dp-minimality is equivalent to every nonalgebraic $1$-type having weight $1$, as observed in \cite{7}. It turns out that a theory is dp-minimal just in case it is both inp-minimal and NIP (see \cite{8}). 

One of the context investigated in \cite{2} is that of (bi)-ordered groups.

\begin{definition}
A \emph{left-ordering} on a group $(G, \cdot)$ is a total ordering $<$ on $G$ such that for any $f, g, h \in G$, 
whenever $g < h$, we have that $f \cdot g < f \cdot h$. A \emph{right-ordering} is defined similarly, and a \emph{bi-ordering}
on $G$ is an ordering which is simultaneously a left-ordering and a right-ordering.
\end{definition}

While Pierre Simon claimed that all inp-minimal ``ordered groups'' are abelian \cite{2}, 
his proof only applies to groups with a definable bi-ordering: his argument uses the fact that for any $x, y$ in a
bi-orderable group and any positive $n \in \N$, if $x^n = y^n$ then $x = y$. But in left-orderable groups (such as in the 
example of the Klein bottle group below), one may have that $x^2 = y^2$ but $x \neq y$.

The main result of Section 2 of this note is that every inp-minimal left-ordered group is abelian (Theorem~\ref{LO_abelian}), which strengthens the result mentioned above from \cite{2}. We
also show that this conclusion fails already in the dp-rank 2 case by providing a suitable example (Section 1). Finally, in Section 3 we consider necessary conditions for an arbitrary (not necessarily ordered) group to have finite burden. In the stable case, this gives a simple and apparently new condition on stable groups $G$ of finite weight: such a group must contain finitely many definable abelian subgroups $A_0, \ldots, A_k$ such that $G / N[A_0] \ldots N[A_k]$ has finite exponent (Corollary~\ref{quotient}).

\section{A non-abelian 
left-ordered group of dp-rank 2}\label{example}

In this section, we define the ``Klein bottle group'' (the fundamental group of a Klein bottle) which is presented as $G = \langle x, y : x^{-1} y x = y^{-1} \rangle$. In other words, $y^{-1} x = x y$, and routine algebraic manipulation shows:

\begin{enumerate}
\item Every $g \in G$ can be uniquely written as $g = x^n y^m$ for some $n, m \in \Z$,
\item $(x^n y^m) \cdot (x^{n'} y^{m'}) = x^{n+n'} y^{m' + (-1)^{n'} m}$, and
\item $(x^n y^m)^{-1} = x^{-n} y^{(-1)^{n+1} m}$.
\end{enumerate}

We can define a left ordering $\leq$ on $G$ lexicographically on the exponents: $x^n y^m \leq x^{n'} y^{m'}$ iff either $n < n'$ or else $n = n'$ and $m \leq m'$. The subgroup generated by $y$ is the minimal nontrivial convex subgroup of $G$, and the order type of $G$ is $\Z \times \Z$.

We note in passing that while $G$ is non-abelian, it is abelian-by-\emph{finite}: a simple calculation shows that the centralizer $C(y)$ of $y$ is $\{x^{2n} y^m : n, m \in \Z \}$, which is abelian, and for any $g \in G$, we have $g^2 \in C(y)$.

\begin{prop}
The structure $(G, \cdot, \leq)$ is dp-rank $2$.
\end{prop}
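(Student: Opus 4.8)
The plan is to establish the two inequalities separately: dp-rank$(G,\cdot,\leq) \leq 2$ and dp-rank$(G,\cdot,\leq) \geq 2$. Since the dp-rank is the supremum of the depths of ict-patterns in $x=x$, these together give that it equals $2$.

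For the upper bound I would use the normal form to observe that $(G,\cdot,\leq)$ is \emph{interpretable in Presburger arithmetic} $(\Z,+,\leq)$: identify the universe with $\Z \times \Z$ via $x^n y^m \mapsto (n,m)$, note that the group law $(n,m)\ast(n',m') = (n+n',\, m' + (-1)^{n'}m)$ is definable in $(\Z,+,\leq)$ once one observes that the parity of $n'$ is definable (as $\exists k\,(n'=k+k)$), and that the lexicographic order is visibly definable. Presburger arithmetic is dp-minimal, so by subadditivity of dp-rank the two-dimensional sort $\Z\times\Z$ has dp-rank at most $1+1=2$ in $(\Z,+,\leq)$. Since every formula of $(G,\cdot,\leq)$ pulls back, under this definable identification, to a formula of $(\Z,+,\leq)$ evaluated on elements of $\Z\times\Z$, any ict-pattern in $(G,\cdot,\leq)$ is in particular an ict-pattern on the sort $\Z\times\Z$ computed in Presburger; hence its depth is at most $2$, and dp-rank$(G,\cdot,\leq)\leq 2$.

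For the lower bound I would produce an ict-pattern of depth $2$, most conveniently through the criterion that dp-rank $\geq 2$ is witnessed by two mutually indiscernible sequences $I_0,I_1$ together with a single tuple $c$ such that neither $I_0$ nor $I_1$ is indiscernible over $c$. Working in a monster model, I would take $I_0=(a_i)$ to run off in the coarse ``$n$-direction'' (the $G/\langle y\rangle$-component) and $I_1=(b_j)$ to run along the fine ``$m$-direction'' inside the convex subgroup $\langle y\rangle$, extracted so as to be mutually indiscernible, and then choose a suitable $c=x^{n_0}y^{m_0}$. The point $c$ breaks $I_0$ already through the plain order (its first coordinate realises a cut among the $a_i$), while it breaks $I_1$ through a \emph{group-twisted} order formula: for instance $\chi(z;c):= c\cdot z \leq c$ satisfies $\chi(y^j;c)$ iff $j\leq 0$, since $c\cdot y^j = x^{n_0}y^{m_0+j}$, so it is not eventually constant along the fine sequence. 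This shows that the second direction really is accessible using $\leq$ together with the multiplication.

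The main obstacle is exactly this lower-bound decoupling, together with the mutual indiscernibility. As a \emph{pure} linear order the reduct $(G,\leq)$ is dp-minimal (every infinite linear order is), so the second dimension cannot come from the order alone and must be extracted using the group operation, as in the formula above; one must check that the two breaks are genuinely independent despite the fact that $\leq$ is dominated by the $n$-coordinate and that the multiplication carries the parity twist $(-1)^{n'}$. Verifying that $I_0$ and $I_1$ can be arranged to be mutually indiscernible, and that a single $c$ breaks both, then reduces to a description of $1$-types in $(G,\cdot,\leq)$, which one reads off from the normal form and the interpretation in Presburger already used for the upper bound.
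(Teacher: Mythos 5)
Your upper bound is fine and is essentially the paper's own argument (definability of $G$ on $\Z\times\Z$ inside Presburger arithmetic, which is dp-minimal, plus subadditivity of dp-rank from Kaplan--Onshuus--Usvyatsov). The lower bound, however, has a genuine flaw, and it is located exactly at your key formula. By \emph{left-invariance} of the order, $\chi(z;c) := c\cdot z \leq c$ is equivalent to $z \leq e$: multiplying on the left by $c^{-1}$ preserves $\leq$, so $c z \leq c\cdot e$ iff $z \leq e$. Thus $\chi$ is a parameter-free formula in disguise --- your own computation already shows this, since the cut you found sits at $j=0$ regardless of $n_0,m_0$ --- and a parameter-free formula can never witness failure of indiscernibility over $c$ of a sequence that is indiscernible over $\emptyset$. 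The apparent ``break'' comes only from the fact that your proposed sequence $(y^j)_{j<\omega}$ passes through the definable element $e$ and is therefore not indiscernible in the first place (indeed neither proposed sequence is: $z_1 z_1 = z_2$ holds of $(y,y^2)$ but not of $(y^2,y^3)$, and mutual indiscernibility is further obstructed by the parity twist $y x^i = x^i y^{(-1)^i}$). Once you pass to genuinely mutually indiscernible sequences, as the criterion you invoke requires, every element of the fine sequence lies on one side of $e$ and $\chi$ is constant on it; the variants $zc \leq c$ and $zc \leq cz$ fail the same way, their cuts again sitting at the identity independently of $c$. So the ``extraction plus description of $1$-types'' step that you defer is not a routine verification: it is the entire content of the lower bound, and the formulas you propose cannot carry it.

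What is missing is the group-theoretic ingredient the paper uses: the centralizer $C(x) = \langle x \rangle$ and its right cosets. The paper's proof first computes $C(x)=\langle x\rangle$, then observes that in the standard model one can choose pairwise disjoint order-intervals $I_{n,k}\ni x^k$ each meeting the cosets $C(x), C(x)y,\dots,C(x)y^n$, and by compactness gets, in an $\omega$-saturated extension, disjoint intervals $I_k \ni x^k$ each meeting \emph{every} coset $C(x)y^j$. The formulas ``$z\in I_k$'' and ``$z \in C(x)y^j$'' then form an inp-pattern of depth $2$ (both rows are $2$-inconsistent, and every pair from distinct rows is consistent), so $G$ is not inp-minimal; since $G$ is NIP by the Presburger interpretation, and dp-minimality is equivalent to inp-minimality plus NIP, $G$ is not dp-minimal. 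Note that the coset formula carries the parameter $x$ --- it is the conjugation/centralizer structure, not any order formula twisted by left multiplication, that makes the second, ``fine'' direction visible; this is precisely the ingredient your sketch lacks, and without it the proposed lower bound does not go through.
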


\begin{proof}

To show that it is NIP and dp-rank \emph{at most} $2$, it suffices (thanks to the additivity of the dp-rank proved in \cite{6}) to note that an isomorphic copy of $G$ is definable in the dp-minimal structure $(\Z, <, +)$ with $\Z \times \Z$ as the the universe of the group; the definition of the group operation depends on the parity of one of the coordinates, but of course $2 \Z$ is a definable subgroup of $\Z$.

Now we compute the centralizer $C(x)$ of the generator $x$. For any $a, b \in \Z$, we have $$x \cdot (x^a y^b) = x^{a+1} y^b$$ and $$(x^a y^b) \cdot x = x^{a+1} y^{-b},$$ so we conclude that $C(x) = \langle x \rangle$.

For any $n \in \N$, there are pairwise disjoint intervals $\langle I_{n,k} : k < \omega \rangle$ such that for any $k$, $x^k \in I_{n,k}$ and $I_{n,k}$ intersects every right coset $C(x), C(x) y, \ldots, C(x) y^n$. So by compactness, in an $\omega$-saturated extension of $G$, we can find pairwise disjoint intervals $\langle I_k : k < \omega \rangle$ such that $x^k \in I_k$ and each $I_k$ intersects every right coset $C(x) y^n$. Therefore the formulas expressing $z \in I_k$ and $z \in C(x) y^j$ (in the free variable $z$) give an inp-pattern of depth $2$, so $G$ is not inp-minimal, hence by NIP it is not dp-minimal.


\end{proof}

\begin{remark}
In a previous version of this paper, we asked whether the group $G$ above is dp-minimal in the pure language of groups. This was answered negatively by Halevi and Hasson \cite{HH}.
\end{remark}

\section{Inp-minimal left-ordered groups}
In this section, we prove that every inp-minimal left-ordered group is abelian.
For a left-ordered group $G$ and a subset $A\subseteq G$, by $h(A)$ we will denote
the convex hull of $A$ in $G$.
\begin{fact}[\cite{2}]\label{simon}
 Let G be an inp-minimal group. Then there is a definable normal abelian
 subgroup $H$ of $G$ such that $G/H$ has finite exponent.
\end{fact}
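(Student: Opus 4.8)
The plan is to reason contrapositively at the level of definable subgroups, using inp-minimality only through the non-existence of depth-$2$ inp-patterns. The engine is a simple ``crossing cosets'' construction: suppose $A$ and $B$ are definable subgroups, both of infinite index, such that every coset of $A$ meets every coset of $B$. Taking $\varphi_0(x;a)$ to assert $a^{-1}x \in A$ and $\varphi_1(x;b)$ to assert $b^{-1}x \in B$, and letting the two rows of the array enumerate infinitely many distinct cosets of $A$ and of $B$ respectively, I get an inp-pattern of depth $2$: each row is a family of pairwise disjoint cosets, hence $2$-inconsistent, while each path is a single $A$-coset intersected with a single $B$-coset, which is consistent exactly because the cosets cross. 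The cleanest instance to isolate first is that $G$ admits no two definable \emph{normal} subgroups $A,B$ with $[A:A\cap B]$ and $[B:A\cap B]$ both infinite, since for normal $A,B$ the crossing is automatic (one checks $ab \in bA \cap aB$ whenever $a\in A$, $b\in B$). Thus in an inp-minimal group the definable normal subgroups are totally preordered by inclusion modulo finite index.

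Next I would build $H$. The chain condition from the first step says the definable normal subgroups sit in an essentially linear order, and I would use this to choose a definable normal subgroup $H$ that is abelian and as large as possible. Two examples guide the choice and show it is delicate: in the infinite dihedral group the right $H$ is the index-$2$ translation subgroup (which is exactly the FC-center), whereas the cautionary example of an infinite extraspecial $p$-group has the whole group as its FC-center and one must instead take $H$ to be the centre, with quotient of exponent $p$. So $H$ cannot simply be the FC-center; I would instead analyse $H_0=\{g : g \text{ has finitely many conjugates}\}$, first upgrading inp-minimality to a uniform bound (a single $N$ with $H_0=\{g:[G:C(g)]\le N\}$, so that $H_0$ is definable), then applying B.\,H.\,Neumann's theorem that a group of boundedly finite conjugacy classes has finite derived subgroup, and finally taking $H$ to be the centralizer in $G$ of that finite derived subgroup intersected with the relevant normal subgroups supplied by the first step. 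The target is a single definable normal \emph{abelian} subgroup $H$.

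The last task is to show $G/H$ has finite exponent. Burden is monotone under definable quotients, so $G/H$ is again inp-minimal, and I would attack the exponent by a pattern argument on the power and conjugation maps: if elements of $G/H$ had unbounded order, an element whose powers escape every bounded definable set ought to generate, together with a transverse infinite-index family of cosets from its centralizer, a forbidden depth-$2$ pattern, forcing a uniform $n$ with $g^n\in H$ for all $g$. This final step is exactly where the main obstacle lies. The engine of the first paragraph produces constraints of \emph{finite-index} type, whereas the conclusion is of \emph{finite-exponent} type, and the two are genuinely different; moreover, because inp-minimality does not imply NIP, there is no Baldwin--Saxl chain condition to invoke, so every finiteness (the uniform bound $N$, the abelianness of $H$, and above all the finite exponent of $G/H$) has to be coaxed out by hand-crafting crossing coset families, and arranging that one and the same definable $H$ is simultaneously normal, abelian, and of finite-exponent coindex is the crux of the argument.
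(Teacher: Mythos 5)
The first thing to note is that the paper does not prove Fact~\ref{simon} at all: it is quoted from Simon \cite{2} (Proposition 3.1 there), so what you are attempting is a reproof of Simon's theorem rather than a reconstruction of an argument in this paper. Your first paragraph does isolate the correct engine---the coset-crossing inp-pattern---but in a form too weak to run the rest of your own plan. You establish comparability-up-to-finite-index only for \emph{normal} definable subgroups, since your crossing point $ab\in bA\cap aB$ uses normality. The version this argument actually yields, with no normality hypothesis, is the one recorded as a Fact in Section 3 of this paper: for any definable $H,K\le G$, either $[H:H\cap K]$ or $[K:H\cap K]$ is finite. One takes the rows to be \emph{left} cosets $a_iK$ (with $a_i\in H$ pairwise distinct modulo $H\cap K$) and \emph{right} cosets $Hb_j$ (with $b_j\in K$ pairwise distinct modulo $H\cap K$), which cross at $a_ib_j$ with no normality needed. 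This is not a cosmetic difference: every subgroup your second and third paragraphs need to compare is a centralizer $C(g)$, and centralizers are not normal, so the chain condition you actually proved never applies to the objects your plan manipulates.

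Second, and decisively, your second and third paragraphs are an itinerary rather than a proof, as you yourself concede (``this final step is exactly where the main obstacle lies''). Three assertions are left with no derivation from inp-minimality: (i) the uniform bound $N$ that would make the FC-centre $\{g:[G:C(g)]\le N\}$ definable (only with this bound in hand can you invoke B.~H.~Neumann's theorem); (ii) the abelianness of the eventual $H$; and (iii) the finite exponent of $G/H$, which is the whole content of the Fact. No inp-pattern is exhibited for any of them. Let me also point out that the difficulty you single out as the crux---``arranging that one and the same definable $H$ is simultaneously normal, abelian, and of finite-exponent coindex''---is illusory, by the double-centralizer trick this paper itself uses in the proof of Corollary~\ref{quotient} (there $A_g=C(C(g))$). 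Indeed, Fact~\ref{simon} is \emph{equivalent} to the statement: there is an $n$ such that any two $n$-th powers in $G$ commute. Given such an $n$, the set $S=\{g^n:g\in G\}$ is conjugation-invariant and pairwise commuting, so $H:=C(C(S))$ is definable (the condition $x\in C(S)$ is expressed by $\forall y\,[x,y^n]=e$), normal, abelian (since $S\subseteq C(S)$ forces $C(C(S))\subseteq C(S)$), and contains every $g^n$; hence $G/H$ has exponent dividing $n$. So the theorem reduces to that single statement about commuting powers, and that statement---where the real work in Simon's proof lies---is precisely what your proposal never proves.
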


Note that (in contrast to the bi-ordered groups), in a left-ordered group $G$,
 the convex hull of a subgroup $H$ need not be a subgroup of $G$:
 
 \begin{example}
  Consider the left-ordered group $G=(Aut(\mathbb{Q},<),\prec)$, where $\prec$ is a standard left-order on $Aut(\mathbb{Q},<)$
  coming from a well-order on $\mathbb{Q}$, i.e., $f\prec g$ if $f(x)<g(x)$ for $x$ being the smallest (in the sense of the well-order) element
  on which $f$ differs from $g$. Choose $a<b<c<d<e\in \mathbb{Q}$, where $a$ is the first element of $\mathbb{Q}$ in the fixed
  well-order, and $f,g\in G$ such that $f(a)=c$, $f(d)=d$, $g(a)=b$ and $g(b)=e$. Then, $g^2(a)=e$, and, for any $k\in \mathbb{Z}$, $f^k(a)<e$, so
  $f^k\prec g^2$. So we have that $g^2\notin h(<f>)$, but, clearly, $g\in h(<f>)$. So we get that $h(<f>)$ is not a subgroup of $G$.
 \end{example}
Nevertheless, in left-ordered groups, $h(H)$ is always a union
 of right $H$-cosets, and, as an analogue of Lemma 3.2
 from  \cite{2}, we obtain:
 
 \begin{fact}\label{simon_left}
  Let $G$ be  an  inp-minimal  left-ordered  group.   Let $H$ be  a  definable
subgroup of $G$ and let $C$ be the convex hull of $H$.  Then $C$ is a union of finitely many
right $H$-cosets.
 \end{fact}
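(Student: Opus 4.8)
The plan is to prove the contrapositive: assuming that the convex hull $C$ decomposes into infinitely many right $H$-cosets, I will construct an inp-pattern of depth $2$ in the variable $x=x$, contradicting inp-minimality of $G$. Recall that a left-ordered group is torsion-free, so I may assume that $H$ is nontrivial, hence infinite and with neither a largest nor a smallest element; the degenerate case $H=\{e\}$ gives $C=\{e\}$, a single coset, and there is nothing to prove. I will use throughout that left multiplication by a fixed element of $G$ is an order-isomorphism of $(G,\leq)$, and that for any element of $H$ this order-isomorphism maps each right coset $Hc$ onto itself (since $t\cdot Hc=Hc$ whenever $t\in H$).

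The two rows of the pattern are the obvious ones. Row $1$ is given by the formula $\varphi_1(x;y_1)$ expressing $x\in Hy_1$, i.e. $xy_1^{-1}\in H$ (a formula since $H$ is definable), with parameters ranging over representatives of infinitely many distinct cosets inside $C$; this row is $2$-inconsistent because distinct cosets are disjoint. Row $0$ is given by $\varphi_0(x;y_0)$ expressing that $x$ lies in the order-interval whose endpoints are coded by $y_0$, and I will arrange the parameters so that the intervals are pairwise disjoint, making this row $2$-inconsistent as well. The whole content of the argument is to produce intervals that are simultaneously pairwise disjoint and each meeting every chosen coset, so that every path through the pattern is consistent.

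The key device is to build these intervals by translation. Fix finitely many distinct cosets $Hc_0,\dots,Hc_{n-1}\subseteq C$ and pick a representative $p_i\in Hc_i$ for each $i$. Since each $p_i$ lies in $C$, hence between two elements of $H$, there are $u,v\in H$ with $u\leq p_i\leq v$ for all $i$, so the closed interval $I=[u,v]$ meets every $Hc_i$. As $H$ has no largest element, choose $g\in H$ with $g>v$ and set $t=gu^{-1}\in H$; then $tu=g>v$, so $tI$ lies strictly above $I$, and inductively the translates $\langle t^kI:k<\omega\rangle$ are pairwise disjoint. Because each $t^k$ is order-preserving and fixes every right coset setwise, each $t^kI$ still meets every $Hc_i$. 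This yields, for every $n$, a family of pairwise disjoint intervals (indexed by $k<\omega$) each meeting the first $n$ cosets.

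Finally I would pass to a sufficiently saturated elementary extension $G^{*}\succeq G$ and invoke compactness exactly as in the dp-rank computation for the Klein bottle group above: the partial type asserting the existence of a sequence $\langle b_k:k<\omega\rangle$ coding pairwise disjoint intervals, each meeting all of the fixed countably many cosets $Hc_j$, is finitely satisfiable in $G$ by the construction of the previous paragraph, hence realized in $G^{*}$; membership in $H$ is absolute between $G$ and $G^{*}$, so the chosen cosets remain pairwise distinct. Taking row $0$ to consist of these intervals and row $1$ of the cosets $Hc_j$ then gives an inp-pattern of depth $2$, contradicting inp-minimality. The one step that requires care—and the reason for the detour through saturation—is precisely this last point: a single interval in $G$ need only meet finitely many of the cosets, so arranging both rows to have full width $\omega$ at once cannot be done inside $G$ and must be obtained by compactness.
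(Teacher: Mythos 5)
Your proof is correct and is essentially the paper's argument: the paper's own proof consists of showing (via left-invariance) that every right $H$-coset contained in $C$ is cofinal in $C$, and then ``repeating the proof from \cite{2}'', which is precisely the intervals-versus-cosets inp-pattern of depth $2$ that you construct explicitly by compactness. Your translation step (left-multiplying $I=[u,v]$ by $t=gu^{-1}\in H$, which moves the interval strictly upwards while mapping each right $H$-coset onto itself) is the same left-invariance observation that the paper packages as cofinality of the cosets, so the two proofs differ only in that yours is self-contained rather than deferring to Simon's lemma.
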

 {\em Proof.}
 All we need to repeat the proof from \cite{2} is to prove that all cosets of $H$ contained in $C$ are cofinal in $C$.
 So take any $c\in C$ and fix $h\in H$. Choose $h_1\in H$ such that $h_1<c$. Then, by the left-invariance of 
 the order, $h=(hh_1^{-1})h_1<hh_1^{-1}c\in Hc$, so $Hc$ is cofinal in $C$.
 
\hfill $\square$\\

We will also use a group-theoretic fact about FC-groups.
\begin{definition}
An FC-group is a group in which the centralizer of every element has finite index.
\end{definition}
Note that if $[G:Z(G)]$ is finite, then $G$ is an FC-group.
The following is Theorem 6.24 from \cite{1}:
\begin{fact}\label{lam}
 Every torsion-free FC-group is abelian.
\end{fact}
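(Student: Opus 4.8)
The plan is to reduce to the finitely generated case and then apply Schur's theorem. Since being abelian is a property witnessed by pairs of elements, and any two elements of $G$ lie in a finitely generated (indeed $2$-generated) subgroup, it suffices to show that every finitely generated torsion-free FC-group is abelian. This reduction is legitimate because subgroups of FC-groups are again FC-groups -- if $H \leq G$ and $g \in H$, then $C_H(g) = C(g) \cap H$ has finite index in $H$ -- and subgroups of torsion-free groups are torsion-free.

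So let $G = \langle g_1, \ldots, g_n \rangle$ be a finitely generated torsion-free FC-group. First I would observe that $G$ is center-by-finite: the center $Z(G)$ equals $\bigcap_{i=1}^n C(g_i)$, a finite intersection of subgroups each of which has finite index in $G$ by the FC hypothesis, so $Z(G)$ itself has finite index in $G$. Next I would invoke Schur's theorem, which states that if $Z(G)$ has finite index in $G$ then the commutator subgroup $[G,G]$ is finite. Finally, since $G$ is torsion-free, the subgroup $[G,G]$ is both finite and torsion-free, hence trivial, so $G$ is abelian. Passing back to the general case via the directed union of finitely generated subgroups then shows that $G$ is abelian, since any two elements commute inside some finitely generated (abelian) subgroup.

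The main obstacle is simply having Schur's theorem on hand; the rest of the argument is routine. If a self-contained proof were desired, one would have to reprove Schur's theorem -- for instance via the transfer homomorphism, or by a direct counting argument bounding the number and orders of commutators using a transversal of $Z(G)$ in $G$ -- and this is where the real content lies. Everything else (the FC-stability of subgroups, the center-of-finite-index computation, and the torsion-free cancellation of the finite commutator subgroup) is elementary.
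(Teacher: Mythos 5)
Your proof is correct. Note that the paper itself does not prove this Fact at all: it is quoted as Theorem 6.24 of Lam's \emph{A First Course in Noncommutative Rings}, so there is no internal argument to compare against. The route you take --- pass to two-generated subgroups (using that the FC property and torsion-freeness are inherited by subgroups), observe that a finitely generated FC-group is centre-by-finite since its centre is a finite intersection of finite-index centralizers, apply Schur's theorem to conclude the commutator subgroup is finite, and then kill it by torsion-freeness --- is precisely the standard textbook proof of the cited result, and every step checks out. The only nontrivial external input is Schur's theorem, which is a classical fact that is perfectly reasonable to invoke here, just as the paper invokes Lam.
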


\begin{theorem}
\label{LO_abelian}
Every left-ordered inp-minimal group is abelian.
\end{theorem}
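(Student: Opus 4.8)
The plan is to show that $G$ is a torsion-free FC-group and then invoke Fact~\ref{lam}. That $G$ is torsion-free is immediate: if $g\neq e$, say $e<g$, then left-invariance gives $e<g<g^2<\cdots$, so $g^n\neq e$ for all $n$. To get the FC property it suffices, by the remark that $[G:Z(G)]<\infty$ implies FC, to exhibit a finite-index central subgroup; and for that it is enough to show that the subgroup $H$ furnished by Fact~\ref{simon} (normal, abelian, definable, with $G/H$ of finite exponent $n$) is both of finite index and central.

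First I would prove $[G:H]<\infty$. Writing $C$ for the convex hull $h(H)$, a short computation using normality of $H$ and left-invariance shows $C$ is a subgroup: if $h_1\le a\le h_2$ and $h_3\le b\le h_4$ with all $h_i\in H$, then pushing the bounds on $b$ past $a$ via $a h a^{-1}\in H$ exhibits $ab$ between two elements of $H$, and a similar manipulation handles $a^{-1}$. Being a definable convex subgroup, $C$ satisfies $[C:H]<\infty$ by Fact~\ref{simon_left}. Now $G$ acts on the totally ordered set of left cosets $G/C$ by order-preserving permutations (left translation). Since $H$ is normal and contained in $C$ we have $H\subseteq\operatorname{core}_G(C)$, and as $G/H$ has exponent $n$ every $a\in G$ satisfies $a^n\in H\subseteq\operatorname{core}_G(C)$, so $a$ acts on $G/C$ as a permutation whose $n$-th power is the identity. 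An order-preserving permutation of finite order of a totally ordered set is the identity, so every $a$ fixes $G/C$ pointwise; hence $G=\operatorname{core}_G(C)\subseteq C$, giving $[G:H]=[C:H]<\infty$.

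It remains to prove $H\subseteq Z(G)$. Conjugation by $g$ restricts to an automorphism $c_g$ of the ordered abelian group $H$, and since $g^n\in H$ and $H$ is abelian we get $c_g^{\,n}=\mathrm{id}$, so every $c_g$ has finite order. An order-preserving automorphism of finite order of an ordered abelian group is trivial, so if $c_g\neq\mathrm{id}$ for some $g$ then $c_g$ is not order-preserving and there is $h\in H$ with $h>e$ but $ghg^{-1}<e$. Since $H$ is torsion-free this forces $c_g(h^j)\neq h^j$ for all $j\neq 0$, so the right cosets $C_G(g)h^j$ are pairwise distinct and $C_G(g)$ has infinite index (indeed $[H:H\cap C_G(g)]$ is infinite). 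The goal is then to contradict inp-minimality exactly as in Section~\ref{example}: using the order-reversal $ghg^{-1}<e<h$ together with the powers of $g$ (which all lie in $C_G(g)$), one arranges a family of pairwise disjoint intervals each meeting every coset $C_G(g)h^j$, producing a depth-$2$ inp-pattern. Equivalently, one argues that $C_G(g)$ is cofinal and coinitial in $G$, so that its convex hull is all of $G$, directly contradicting Fact~\ref{simon_left} since $C_G(g)$ has infinite index.

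I expect this last step to be the main obstacle. The delicate point is the interleaving: verifying (via $\omega$-saturation and the inequality $ghg^{-1}<e<h$) that the intervals around the powers of $g$ can be chosen pairwise disjoint while still meeting every coset $C_G(g)h^j$, which is precisely where left-invariance, as opposed to bi-invariance, must be handled with care. Once $H\subseteq Z(G)$ is established, $Z(G)$ has finite index, $G$ is an FC-group, and Fact~\ref{lam} finishes the proof.
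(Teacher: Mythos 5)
Your opening reductions are correct. Torsion-freeness is immediate from left-invariance, and your argument that $[G:H]<\infty$ does work, though it is more roundabout than necessary: since $G/H$ has exponent $l$, any $a>e$ satisfies $e<a<a^2<\cdots<a^l\in H$, so $h(H)=G$ outright and Fact~\ref{simon_left} applies directly, with no need to check that $h(H)$ is a subgroup or to argue with $\operatorname{core}_G(C)$ and the action on the coset space. Your reduction of the centrality of $H$ to the statement ``if conjugation $c_g$ is nontrivial on $H$ for some $g$, then inp-minimality fails'' is also sound, including the extraction of $h\in H$ with $ghg^{-1}<e<h$ (using that the order restricted to the abelian group $H$ is bi-invariant) and the conclusion that the cosets $C_G(g)h^j$ are pairwise distinct, so $[G:C_G(g)]$ is infinite.

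The genuine gap is the final step, which you yourself flag as ``the main obstacle'': the contradiction is never derived, and both of your proposed finishes are unjustified at exactly the hard point. The claim that $C_G(g)$ is cofinal and coinitial in $G$ has no basis --- nothing in the hypotheses prevents the centralizer of a single element from being bounded in the order; Fact~\ref{simon_left} bounds the number of cosets inside $h(C_G(g))$ but gives no lower bound on how large $h(C_G(g))$ is. Similarly, the inp-pattern in Section~\ref{example} exploited the explicit interleaving in the Klein bottle group (each coset $C(x)y^j$ has a representative $x^ky^j$ near every power $x^k$); no analogue of this follows from the data $ghg^{-1}<e<h$ alone. The paper's proof sidesteps precisely this difficulty, and its key idea is what your argument is missing: it never looks at a single centralizer, and it never proves $H\subseteq Z(G)$. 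Instead, for a given positive $x$ it chooses, using normality of $H$, coset representatives $x=x_0<x_1<\cdots<x_{n-1}$ with $G=\bigcup_i Hx_i$, so that $[e,x]$ automatically lies inside \emph{every} convex hull $h(C(x_i))$ (because $e<x\le x_i\in C(x_i)$ --- no unboundedness of any $C(x_i)$ is needed). Each $h(C(x_i))$ is a finite union of right $C(x_i)$-cosets by Fact~\ref{simon_left}, and intersecting these with the $H$-cosets shows $[e,x]$ is covered by finitely many right cosets of $A=H\cap\bigcap_{i<n}C(x_i)$, which is central since $H$ is abelian and $G=\langle H,x_0,\dots,x_{n-1}\rangle$. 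An $\omega$-saturation argument (finding one $x$ above infinitely many positive representatives of distinct right $Z(G)$-cosets) then contradicts $[G:Z(G)]=\infty$. Without this intersection trick, or some genuinely new argument for your cofinality claim, your proof does not go through.
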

{\em Proof.}
By Fact \ref{lam}, it is enough to show that $[G:Z(G)]$ is finite.
Let $H$ be given by Fact \ref{simon}. Notice that $G=h(H)$: if $a\in G$, say $a>e$, then
for $l$ equal to the exponent of $G/H$, we have $e<a<a^2<\dots<a^l\in H$, so $a\in h(H)$.
Hence, by Fact \ref{simon_left}, we get that $n:=[G:H]$ is finite.
\begin{claim}
\label{interval}
For any positive $x\in G$, the interval $[e,x]$ is covered by finitely many right cosets 
of a central subgroup of $G$.
\end{claim}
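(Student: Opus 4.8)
The plan is to exploit the structure we already have: $H$ is a definable normal abelian subgroup with $n=[G:H]$ finite, and $G=h(H)$. Since $H$ is abelian, $H\subseteq C_G(h)$ for every $h\in H$, so each centralizer $C_G(h)$ has index at most $n$ in $G$. The natural central subgroup to use is $Z:=\bigcap_{h\in H}C_G(h)$, which is the centralizer of $H$ in $G$; this is a definable normal subgroup, it contains $H$ (as $H$ is abelian, so $H\subseteq Z$), and since $[G:H]=n$ is finite, $Z$ has finite index in $G$ as well. I first want to check that $Z$ is genuinely central, or at least central enough: elements of $Z$ commute with all of $H$, and since $[G:H]$ is finite I can pass to a finite-index central subgroup $Z_0\subseteq Z$ by further intersecting with the (finitely many) centralizers of coset representatives of $H$ in $G$. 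Thus $Z_0$ is a definable central subgroup of finite index.

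With such a central subgroup $Z_0$ of finite index $m$ in hand, the claim should follow from a convexity/cofinality argument analogous to Fact~\ref{simon_left}. First I would note that $Z_0$, being a finite-index subgroup, satisfies $G=h(Z_0)$ by exactly the exponent argument used for $H$ in the theorem's proof: for positive $a$, we have $e<a<a^2<\dots<a^{m}\in Z_0$. Then I would apply Fact~\ref{simon_left} with the subgroup $Z_0$ in place of $H$: its convex hull $C$ (which is all of $G$) is a union of finitely many right $Z_0$-cosets. But this gives cosets covering all of $G$, and I instead want the bounded interval $[e,x]$ covered by finitely many cosets, which is automatic once I know the convex hull of $Z_0$ meets only finitely many cosets locally.

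The cleaner route is to argue directly on $[e,x]$. Fix positive $x$. I would show that $[e,x]$ meets only finitely many right cosets of $Z_0$, by adapting the cofinality argument from Fact~\ref{simon_left}: if $[e,x]$ met infinitely many distinct cosets $Z_0 g_k$, I could use left-invariance of the order together with the finiteness of $[G:Z_0]=m$ to derive a contradiction, since any element of $G$ lies within one of finitely many cosets and the convex structure forces the cosets intersecting a bounded interval to be bounded in number. Concretely, the intervals between consecutive "new cosets" are bounded below by the structure of $Z_0$ as a convex-hull-dense subgroup, so only finitely many fit below $x$.

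The main obstacle I anticipate is establishing that the relevant central subgroup has finite index and is definable in a way that lets Fact~\ref{simon_left} apply, because Fact~\ref{simon} only hands us the abelian normal $H$, not a central one; the passage from "$H$ abelian of finite index" to "a definable \emph{central} subgroup of finite index" via $Z=C_G(H)$ needs the observation that $C_G(H)$ has finite index (each $C_G(h)\supseteq H$ has index $\le n$, but their intersection over all $h\in H$ could a priori be infinite-index). I expect the resolution is that commuting with $H$ and with a finite set of coset representatives of $H$ suffices to be central modulo finite index, so only finitely many centralizer conditions are truly needed, keeping the subgroup definable and of finite index. Nailing this finiteness is the crux; the convexity covering of $[e,x]$ is then a routine repetition of the cofinality argument already recorded in Fact~\ref{simon_left}.
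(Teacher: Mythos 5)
There is a genuine gap, and it sits exactly where you yourself flagged it: the claim that intersecting $C(H)$ with the centralizers of finitely many coset representatives of $H$ yields a \emph{finite-index} central subgroup $Z_0$. The first half of your construction is fine: since $H$ is abelian, $H \subseteq C(H)$, so $C(H)$ has index at most $n$. But the coset representatives $g_0,\dots,g_{n-1}$ are in general not in $H$, so the centralizers $C(g_i)$ need not contain $H$, and nothing forces them to have finite index. The Klein bottle group of Section 1 illustrates this: it is a left-ordered group with an abelian normal subgroup of index $2$, yet $C(x) = \langle x \rangle$ has infinite index, and $Z(G) = \langle x^2 \rangle$ has infinite index as well. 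Worse, the subgroup you build is not merely ``central enough'': an element commuting with $H$ and with each $g_i$ commutes with every product $h g_i$, hence with all of $G$, so your $Z_0$ \emph{is} $Z(G)$. Thus the finite-index assertion you need is precisely the statement $[G : Z(G)] < \infty$ --- which is what the Claim exists to prove (Theorem~\ref{LO_abelian} reduces to it via Fact~\ref{lam}), so the argument is circular. A symptom of this: if $Z_0$ really had finite index $m$, the Claim would be trivial, since $[e,x] \subseteq G$ is already covered by the $m$ right cosets of $Z_0$, and all of your subsequent convexity/cofinality work would be superfluous; the entire weight of your proof rests on the unproved premise.

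The paper's proof avoids any finite-index claim about central subgroups; its central subgroup depends on $x$ and may well have infinite index. One chooses positive elements $x = x_0 < x_1 < \dots < x_{n-1}$ representing \emph{all} right $H$-cosets (possible since $H$ is normal and every coset has arbitrarily large representatives), and sets $A := H \cap \bigcap_{i<n} C(x_i)$. Centrality of $A$ follows just as in your argument, because $G$ is generated by $H$ together with the $x_i$. The finiteness of the covering comes not from an index computation but from Fact~\ref{simon_left} applied to each definable subgroup $C(x_i)$: each convex hull $h(C(x_i))$ is a union of finitely many right $C(x_i)$-cosets --- this is where inp-minimality actually enters --- and $[e,x] \subseteq \bigcap_{i<n} h(C(x_i))$ because $e \leq x_0 \leq x_i \in C(x_i)$ for each $i$. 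Every point of that intersection then lies in an intersection of one $H$-coset with one right $C(x_i)$-coset for each $i$, and such an intersection is (if nonempty) a single right coset of $A$. If you want to repair your proposal, the missing idea is this: apply Fact~\ref{simon_left} to the centralizers of finitely many well-chosen elements lying above $x$, rather than to a single subgroup asserted in advance to have finite index.
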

{\em Proof of Claim~\ref{interval}.}
We can assume that $G$ is non-trivial (hence it is infinite so also $H$ is non-trivial).
Notice that  any coset $Hg$ in $G$ has a positive representative (if $g$ is negative
then one can take $g^{-(l-1)}$ as such a representative). It follows (thanks to the normality of $H$) that for any 
$y,g\in G$ we can find an element $z\in G$ such that $Hg=Hz$ and $z>y$ 
(by choosing $z:=yw$, where $w$ is a positive element of $Hy^{-1}g$).

Now, fix any positive $x=x_0\in G$. Using the above observation, we can choose 
$x_0<x_1<\dots<x_{n-1}\in G$ such that $G=\bigcup_{i<n} Hx_i$. 
By Fact \ref{simon_left}, each  set $h(C(x_i))$ is covered by finitely many right $C(x_i)$-cosets; call them $C(x_i) k_{i,0}, \ldots, C(x_i) k_{i, m(i)}$. Thus if $$C:=\bigcap_{i<n} h(C(x_i)),$$ then any $y \in C$ belongs to some intersection $$H x_j \cap \bigcap_{i < n} C(x_i) k_{i, \eta(i)}$$ for some $j < n$ and some $\eta : n \rightarrow \omega$. But the above intersection of right cosets is a right coset of $$A:=H\cap \bigcap_{i<n} C(x_i),$$ hence $C$ is covered by finitely many $A$-cosets

But, since $H$ is abelian, and $G$ is generated by $H,x_0,\dots,x_{n-1}$, we have that 
$A\subseteq Z(G)$. Also, since $x_0<x_1<\dots < x_{n-1}$ and $\forall _{i<n} x_i\in C(x_i)$,
we get that $x=x_0\in C$, so, by convexity of $C$, $[e,x_0]\subseteq C$, which proves the claim.
\hfill $\square$\\
Now, suppose for a contradiction that $[G:Z(G)]$ is infinite. Note that if some coset $Z(G) g$ contains only negative elements, then the coset $Z(G) g^{-1}$ contains only positive elements, so in any case we may choose infinitely many
positive representatives $y_0,y_1,y_2,\dots$ of pairwise distinct right cosets of
$Z(G)$ in $G$. Without loss of generality, $G$ is $\omega$-saturated, and there is an element $x\in G$ greater than all the $y_i$'s. Then $[e,x]$ cannot be covered 
by finitely many right cosets of $Z(G)$, so it cannot be covered by finitely many right cosets of 
any central subgroup of $G$, contradicting the Claim.
\hfill $\square$\\

\begin{corollary}
If $(G, \cdot, <)$ is a left-ordered group which is inp-minimal (in the pure language of ordered groups), then it is dp-minimal.
\end{corollary}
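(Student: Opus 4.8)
The plan is to reduce the statement to \emph{Theorem~\ref{LO_abelian}} together with two facts already available. Recall from the introduction that a theory is dp-minimal precisely when it is simultaneously inp-minimal and NIP (this is the content of \cite{8}). Since inp-minimality is part of the hypothesis, the entire task reduces to showing that $(G, \cdot, <)$ is NIP.

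First I would invoke \emph{Theorem~\ref{LO_abelian}}: because $G$ is left-ordered and inp-minimal, it is abelian. This is the crucial input, and it is precisely where the ``left-ordered'' hypothesis does its work. Once $G$ is known to be abelian, the left-ordering $<$ is automatically a bi-ordering, since for abelian $G$ left multiplication and right multiplication coincide; thus $(G, \cdot, <)$ is, in the pure language of ordered groups, simply an ordered abelian group.

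The second step is to observe that the theory of any ordered abelian group is NIP. This is a classical result going back to the quantifier-elimination analysis of ordered abelian groups (Gurevich--Schmitt), and it applies here because after the reduction we are working in the pure language of ordered groups with no additional structure. Combining this with the given inp-minimality yields, via the equivalence in \cite{8}, that $(G, \cdot, <)$ is dp-minimal.

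I do not expect a genuine obstacle here: the only nontrivial ingredient is \emph{Theorem~\ref{LO_abelian}} itself, which is already proved, and the NIP of ordered abelian groups is a standard fact. The one point meriting a sentence of care is the passage from ``left-ordered and abelian'' to ``bi-ordered'', ensuring that we may genuinely treat the structure as an ordered abelian group in the pure language so that the NIP result applies verbatim.
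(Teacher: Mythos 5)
Your proposal is correct and follows exactly the paper's own argument: Theorem~\ref{LO_abelian} gives that $G$ is abelian, the Gurevich--Schmitt theorem gives that ordered abelian groups are NIP, and the equivalence ``dp-minimal $=$ inp-minimal $+$ NIP'' finishes the proof. Your extra remark that a left-ordering on an abelian group is automatically a bi-ordering is a worthwhile clarification (the paper leaves it implicit), but it does not change the route.
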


\begin{proof}
By Theorem~\ref{LO_abelian}, $G$ is abelian, and any ordered abelian group is NIP, as shown in \cite{9}; since NIP and inp-minimality imply dp-minimality, we are done.
\end{proof}

\section{Some observations on groups of finite inp-rank}

The example from Section \ref{example}, as every group definable in the Presburger artithmetic, is abelian-by-finite (see
\cite{10}). It seems natural to ask the following general question:
\begin{problem}
What can be said about ordered groups of finite inp-rank (possibly under some additional model-theoretic assumptions)?
\end{problem}

To apply some ideas from the proof of Theorem \ref{LO_abelian}, it seems necessary to prove some variant of the following
property, which was essentially observed in the proof of Proposition 3.1 from \cite{2}:

\begin{fact}
 If $G$ is an inp-minimal group and $H,K<G$ are definable, then either $[H:H\cap K]$ or
 $[K:H\cap K]$ is finite.
\end{fact}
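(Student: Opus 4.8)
The plan is to prove the contrapositive: assuming that both $[H:H\cap K]$ and $[K:H\cap K]$ are infinite, I will construct an inp-pattern of depth $2$ in the formula $x=x$, contradicting inp-minimality. The two rows of the pattern will be built from cosets of $K$ and of $H$ respectively, but arranged \emph{asymmetrically} --- left cosets of $K$ on one row and right cosets of $H$ on the other --- so that the two kinds of cosets always meet.

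First I would use the infinite indices to extract witnesses. Since $[H:H\cap K]$ is infinite, choose $\{h_j : j<\omega\}\subseteq H$ lying in pairwise distinct left cosets of $H\cap K$; since $[K:H\cap K]$ is infinite, choose $\{k_j : j<\omega\}\subseteq K$ lying in pairwise distinct right cosets of $H\cap K$. Let $\varphi_0(z;w)$ be the formula $w^{-1}z\in K$ (so $\varphi_0(z;h_j)$ asserts $z\in h_jK$) and let $\varphi_1(z;w)$ be $zw^{-1}\in H$ (so $\varphi_1(z;k_j)$ asserts $z\in Hk_j$); these are genuine formulas because $H$ and $K$ are definable. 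The array of parameters is $\bar{a}_{0j}=h_j$ and $\bar{a}_{1j}=k_j$.

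Then I would verify the two defining conditions. For inconsistency along each row: the cosets $h_jK$ are pairwise distinct, since for $j\neq j'$ we have $h_{j'}^{-1}h_j\in H\setminus(H\cap K)$, hence $h_{j'}^{-1}h_j\notin K$; being distinct left cosets of $K$ they are disjoint, so $\{z\in h_jK : j<\omega\}$ is $2$-inconsistent, and symmetrically $\{z\in Hk_j : j<\omega\}$ is $2$-inconsistent (distinct right cosets of $H$, using that $k_jk_{j'}^{-1}\notin H$ for $j\neq j'$). For consistency along each path: given $\eta:2\to\omega$, the single element $h_{\eta(0)}k_{\eta(1)}$ lies in $h_{\eta(0)}K$ (as $k_{\eta(1)}\in K$) and in $Hk_{\eta(1)}$ (as $h_{\eta(0)}\in H$), so it realizes $\varphi_0(z;h_{\eta(0)})\wedge\varphi_1(z;k_{\eta(1)})$. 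This gives an inp-pattern of depth $2$, the desired contradiction.

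The step I would be most careful about is the consistency of the paths, which is exactly where the asymmetry between left cosets of $K$ and right cosets of $H$ is essential: had I instead used cosets of $H\cap K$ on both rows, each path would force a single element into two cosets of the same subgroup --- disjoint unless equal --- and consistency would fail. The product $h_{\eta(0)}k_{\eta(1)}$ sits automatically in the intersection of a left $K$-coset and a right $H$-coset, which is precisely what makes every path consistent while keeping each row $2$-inconsistent.
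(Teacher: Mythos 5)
Your proof is correct. The paper itself states this Fact without proof (it is quoted from the proof of Proposition 3.1 in Simon's paper \cite{2}), but your argument --- pairwise distinct cosets making each row $2$-inconsistent, with the product $h_{\eta(0)}k_{\eta(1)}$ witnessing consistency of every path, the left/right asymmetry being exactly what guarantees the two cosets meet --- is the standard one, and it is precisely the technique this paper itself uses to prove the more general Proposition~\ref{chain}, where path consistency is likewise witnessed by the product $\prod_{i\leq n} e_{i,\eta(i)}$ of coset representatives.
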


Below, we make an observation of this kind in the context of finite inp-rank, but we need to work with normal subgroups.

If $G$ is a group and $A\subseteq G$, then by $N[A]$ we shall denote the normal subgroup of $G$ generated by $A$.
If $H$ is a subgroup of $G$, then we put $A/H:=\{aH:a\in A\}$. For any elements $g,h\in G$, by $g^h$ we mean the 
conjugate $h^{-1}gh$ of $g$ by $h$.
\begin{proposition}\label{chain}
 If $G$ is a group of burden $n\in \omega$, then there do not exist definable sets $D_0,D_1,\dots,D_n$ such
 that, if we put $N_i=N[D_i]$, then $$(\forall i\leq n) (|D_i/N_0N_1\dots N_{i-1}N_{i+1}N_{i+2}\dots N_n|\geq \omega).$$
 Moreover, we can replace the above condition by:  
 for  each $i\leq n$, there is an infinite subset $E_i$ of $D_i$ such that 
 $$(\forall e_0,e_1\in E_i)(e_0e_1^{-1}\in ((D_0D_1\dots D_{i-1}D_{i+1}D_{i+2}\dots D_n)^G)^{2n} \implies e_0=e_1).$$ 
\end{proposition}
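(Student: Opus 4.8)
The plan is to prove the contrapositive: assuming we have definable sets $D_0, \dots, D_n$ satisfying the stated largeness condition, I would construct an inp-pattern of depth $n+1$, contradicting the hypothesis that $G$ has burden $n$. Since the ``moreover'' clause gives a condition that (as I will argue) is \emph{implied} by the first condition, I would first reduce to producing the witnessing infinite sets $E_i$, and then build the pattern directly from these. So the first step is to check that if $|D_i / N_0 \cdots N_{i-1} N_{i+1} \cdots N_n| \geq \omega$ for each $i$, then we can extract an infinite subset $E_i \subseteq D_i$ such that distinct elements $e_0, e_1 \in E_i$ have $e_0 e_1^{-1}$ lying outside a bounded product of conjugates of $D_0 \cdots D_{i-1} D_{i+1} \cdots D_n$; this is essentially a pigeonhole argument, since the normal subgroup $N[A]$ generated by a set $A$ is the subgroup generated by all conjugates $A^G$, and membership in $N_0 \cdots N_{i-1} N_{i+1} \cdots N_n$ forces membership in some fixed bounded product of such conjugates for elements that differ by the relevant quotient structure.

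The core of the argument is building the inp-pattern. For each $i \leq n$, I would take the formula $\varphi_i(z; \bar{y}_i)$ to assert something like ``$z e_1^{-1} \in ((D_0 \cdots D_{i-1} D_{i+1} \cdots D_n)^G)^{2n}$'' (or a closely related coset-membership formula), with the parameters ranging over an enumeration $\langle e_{i,j} : j < \omega \rangle$ of the infinite set $E_i$. The row-inconsistency (part (a) of the inp-pattern definition) should follow from the defining property of $E_i$: the condition guarantees that no single $z$ can simultaneously satisfy the formula for two distinct parameters $e_{i,j}, e_{i,j'}$, giving $2$-inconsistency along each row. The consistency of every ``path'' (part (b)) is where I would pick, for each $\eta : (n+1) \to \omega$, a witness $z$ lying in the appropriate intersection of cosets --- here the point is that choosing one representative from each $E_i$ and taking their product (or a suitable combination) lands $z$ in all $n+1$ of the prescribed cosets simultaneously, which is possible precisely because the sets $E_i$ live in different ``coordinate directions'' cut out by omitting $D_i$ from the product.

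The main obstacle I anticipate is getting the bookkeeping on the products of conjugates to line up so that both halves of the pattern hold at once. The exponent $2n$ in the statement is surely not accidental: it is tuned so that a product of $n+1$ chosen representatives can be rewritten to exhibit membership in each of the $n+1$ cosets, using that a product of elements from $n$ of the $D_k$'s (omitting index $i$), together with the conjugation needed to move factors past one another, stays within $((D_0 \cdots \widehat{D_i} \cdots D_n)^G)^{2n}$. Verifying the consistency of paths will require showing that for a fixed choice of representatives, the intersection over all $i$ of the cosets determined by $\eta$ is nonempty, and I expect this to come down to a careful but elementary computation in the free-product-like expansion of these conjugate products. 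I would organize this by first establishing the single-formula behavior (each $\varphi_i$ defines a union of cosets of a bounded product), then the inconsistency along rows, and finally the path-consistency, keeping the exponent arithmetic explicit only where it is genuinely needed.
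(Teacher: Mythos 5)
Your high-level strategy is the same as the paper's: argue by contrapositive, reduce the first condition to the ``moreover'' condition by choosing coset representatives (this reduction is correct, since $((D_0\cdots D_{i-1}D_{i+1}\cdots D_n)^G)^{2n}$ is contained in the normal subgroup $N_0\cdots N_{i-1}N_{i+1}\cdots N_n$), and then build an inp-pattern of depth $n+1$ whose rows are indexed by the $E_i$ and whose path-witnesses are products of representatives. However, your specific choice of formulas breaks down, and the failure is precisely in the place you flagged as ``bookkeeping.'' Writing $P_i = D_0\cdots D_{i-1}D_{i+1}\cdots D_n$ and $S_i = (P_i^G)^{2n}$, your formula $\varphi_i(z;e)$ says $z e^{-1}\in S_i$, so your instances are right translates $S_i e_{i,j}$ of a single fixed set. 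Row-inconsistency then does \emph{not} follow from the hypothesis on $E_i$: if $z\in S_i e_{i,j}\cap S_i e_{i,j'}$, all you get is $e_{i,j}e_{i,j'}^{-1}\in S_i^{-1}S_i$, a product of up to $4n$ conjugates of elements of $P_i^{\pm 1}$, whereas the defining property of $E_i$ only rules out nontrivial differences lying in $S_i$ itself (a product of $2n$ conjugates). To make your formulas $2$-inconsistent along rows you would need either the hypothesis with exponent $4n$, or the formula with exponent $n$ together with symmetry of the $D_k$'s --- but the $D_k$ are arbitrary definable sets. Path-consistency has a related defect: $\bigl(\prod_k e_{k,\eta(k)}\bigr)e_{i,\eta(i)}^{-1} = (e_{0,\eta(0)}\cdots e_{i-1,\eta(i-1)})\cdot(e_{i+1,\eta(i+1)}\cdots e_{n,\eta(n)})^{e_{i,\eta(i)}^{-1}}$, and placing this inside $S_i$ requires padding each block with factors from the missing $D_k$'s, i.e.\ it requires $1\in D_k$, which again is not given.

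The idea you are missing is the paper's choice of formulas: insert the parameter \emph{into} the product rather than translating by it, namely $\phi_i(x;e_{i,j})$ asserting $x\in D_0\cdots D_{i-1}\,e_{i,j}\,D_{i+1}\cdots D_n$. With this choice path-consistency is literal --- the witness $\prod_{k\le n} e_{k,\eta(k)}$ has one factor in each $D_k$, the $i$-th factor being the parameter itself, so no padding is needed --- and row-inconsistency comes out with exactly the right count: equating two factorizations $d_0\cdots d_{i-1}e_{i,j_0}d_{i+1}\cdots d_n = d_0'\cdots d_{i-1}'e_{i,j_1}d_{i+1}'\cdots d_n'$ of a common point and solving for $e_{i,j_0}e_{i,j_1}^{-1}$ exhibits it as a product of $i+i+2(n-i)=2n$ conjugates of elements of the $D_k$'s ($k\ne i$) and their inverses, which is exactly why the exponent $2n$ appears in the statement. (Even there one is implicitly allowing inverses of elements of the $D_k$'s, a point the paper glosses over and which is harmless in its intended application where the $D_k$ are subgroups; but that issue is orthogonal to yours.) With your translate-based formulas the exponent arithmetic cannot be made to match the stated hypothesis, so as written the proposal has a genuine gap.
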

{\em Proof.}
Clearly, it is enough to prove the ``moreover'' part. Suppose for a contradiction that there exist sets $(D_i)_{i\leq n}$
and $(E_i)_{i\leq n}$ as above. For each $i\leq n$, let $(e_{i,j})_{j<\omega}$ be a sequence of
pairwise distinct elements of $E_i$. We claim that the formulas 
$$(\phi_i(x,e_{i,j}):=x\in D_0D_1\dots D_{i-1}e_{i,j}D_{i+1}D_{i+2}\dots
D_n)_{i\leq n,j<\omega}$$ form an inp-pattern of depth $n+1$, which will contradict the assumption.
Obviously, for any $\eta\in \omega^{n+1}$, the element $\prod_{i\leq n} e_{i,\eta(i)}$ satisfies $\bigwedge_{i\leq n}
\phi_i(x,e_{i,\eta(i)})$. On the other hand, if there is some $g$ satisfying both $\phi_i(x,e_{i,j_{0}})$ and 
$\phi_i(x,e_{i,j_{1}})$, then  for some $(d_{k}, d_{k}')_{k\in \{0,1,\dots,n\}\backslash \{i\}}$ with $d_k,d_k'\in D_k$,
we have $$d_0d_1\dots d_{i-1}e_{i,j_0}d_{i+1}\dots d_n=g=d_0'd_1'\dots d_{i-1}'e_{i,j_1}d_{i+1}'\dots d_n',$$
so $$e_{i,j_0}e_{i,j_1}^{-1}=d_{i-1}^{-1}d_{i-2}^{-1}\dots d_0^{-1}d_0'd_1'\dots d_{i-1}'(d_{i+1}'d_{i+2}'
\dots d_n'd_n^{-1}d_{n-1}^{-1}\dots d_{i+1}^{-1})^{e_{i,j_1}^{-1}}$$ is an element of 
$((D_0D_1\dots D_{i-1}D_{i+1}D_{i+2}\dots D_n)^G)^{2n}$, hence, by the assumption on $E_i$, we get that
$e_{i,j_0}=e_{i,j_1}$. This completes the proof.
\hfill $\square$\\

\begin{example}
 Note that the above proposition does not follow from the (somewhat similarly looking) chain condition \cite[Proposition 4.5 (2)] {11},
  as the latter is satisfied in a non-abelian  free group $F$ (since the only non-trivial definable proper subgroups are 
  the cyclic groups which are not normal), and the conclusion of Proposition \ref{chain} is not satisfied in $F$.\\
  To see this, we may assume (as the failure of the conclusion of Proposition is $\bigwedge$-expressible)
  that $F$ is the free group on generators $x_0,x_1,x_2,\dots$. Put $D_i=\{x_i^m:m<\omega\}$.
  Then, clearly, $$(\forall i\leq n) (|D_i/N[D_0]N[D_1]\dots N[D_{i-1}]N[D_{i+1}]N[D_{i+2}]\dots N[D_n]|\geq \omega).$$
\end{example}

Using the above chain condition we get in the stable context:
\begin{corollary}\label{quotient}
 If $G$ is a stable group of finite weight, then there are finitely many definable abelian subgroups $A_0,\dots,A_k$ of $G$ such
 that the quotient $G/N[A_0]N[A_1]\dots N[A_k]$ has finite exponent.
\end{corollary}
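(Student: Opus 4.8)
The plan is to combine the chain condition of Proposition \ref{chain} with the finite-weight hypothesis in the stable setting, exploiting the well-known correspondence between weight and burden for stable theories. Since $G$ is stable of finite weight $n$, its burden is finite (indeed for stable theories burden coincides with weight), so Proposition \ref{chain} applies with this $n$. The idea is to build a finite sequence of definable abelian subgroups by repeatedly applying the contrapositive of the proposition: as long as the quotient by the normal subgroups generated so far fails to have finite exponent, we can extract a new abelian subgroup whose image contributes infinitely many cosets, and the chain condition guarantees this process terminates after at most $n+1$ steps.

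First I would recall that in a stable group, every element is contained in a definable abelian subgroup, or more usefully, that one can locate definable abelian subgroups witnessing infinite index. The key recursive step is: suppose we have already chosen definable abelian subgroups $A_0,\dots,A_{i-1}$ and the quotient $G/N[A_0]\dots N[A_{i-1}]$ has infinite exponent. I want to produce a definable abelian subgroup $A_i$ such that $A_i$ has infinitely many cosets modulo $N[A_0]\dots N[A_{i-1}]$ (equivalently modulo the product of the other $N[A_j]$'s in the relevant sense). Here stability is used to find, inside an element of infinite order modulo the normal product, a definable abelian subgroup — for instance via the fact that the definable hull of an abelian subgroup is abelian, or by taking centralizers, so that the chosen $A_i$ is genuinely abelian and definable while still projecting to infinitely many distinct cosets.

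Next I would verify that this recursion cannot continue past stage $n$. If it did, we would have produced definable (abelian, hence in particular definable) sets $A_0,\dots,A_n$ with $|A_i/N_0\dots N_{i-1}N_{i+1}\dots N_n|\geq\omega$ for every $i\leq n$, where $N_i=N[A_i]$. This is precisely the configuration forbidden by Proposition \ref{chain} for a group of burden $n$ — one must be slightly careful that the condition extracted at each stage (infinitely many cosets modulo the product of the \emph{earlier} normal subgroups) upgrades to the symmetric condition in the proposition (infinitely many cosets modulo the product of \emph{all other} normal subgroups), but since adding more normal factors only collapses cosets, and the proposition's hypothesis quantifies over all $i$ symmetrically, the chain built greedily still yields a forbidden pattern once the count reaches $n+1$. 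Thus the process stops at some $k\leq n$, at which point $G/N[A_0]\dots N[A_k]$ has finite exponent, as required.

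The hard part will be the extraction step: arranging that the new subgroup $A_i$ is simultaneously \emph{definable}, \emph{abelian}, and witnesses \emph{infinitely many cosets} modulo the accumulated normal product. Guaranteeing abelianness while keeping infinite index is the delicate point, and this is exactly where stability (rather than mere finite burden) enters — one leverages that definable hulls of abelian subgroups are abelian and that chain conditions on centralizers let one descend to a definable abelian piece without losing the infinite-index property. I would expect to invoke a standard stable-group fact (e.g.\ that an infinite stable group contains an infinite definable abelian subgroup, applied relative to the quotient) to carry this out cleanly, and the bookkeeping matching the greedy construction to the symmetric hypothesis of Proposition \ref{chain} is the remaining routine but care-demanding verification.
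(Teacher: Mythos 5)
Your overall strategy (greedily extracting definable abelian subgroups, then appealing to Proposition~\ref{chain} to force termination) matches the paper's starting point, but it breaks at exactly the step you flag and then wave away. The greedy construction gives the \emph{asymmetric} condition: at stage $i$ you get infinitely many cosets modulo $N_0\cdots N_{i-1}$, the normal subgroups generated by the \emph{earlier} $A_j$'s. Proposition~\ref{chain} requires the \emph{symmetric} condition: infinitely many cosets modulo $N_0\cdots N_{i-1}N_{i+1}\cdots N_n$, which includes the \emph{later} factors. Your bridging claim --- ``since adding more normal factors only collapses cosets \ldots the chain built greedily still yields a forbidden pattern'' --- is backwards: adding the factors $N_{i+1},\dots,N_n$ can only \emph{decrease} the number of cosets that $A_i$ meets, so the asymmetric condition is strictly weaker than the symmetric one, not stronger. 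Nothing in the greedy construction prevents a later $N[A_j]$ from containing an earlier $A_i$ outright, collapsing all of its cosets to a single one; in that case no forbidden configuration is ever produced and the argument stalls.

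The paper closes precisely this gap with a stability-theoretic device your proposal never invokes: indiscernibility. One uses compactness to build an infinite sequence $(g_i)_{i<\omega}$ with all powers $g_i^m \notin N[A_{g_0}]\cdots N[A_{g_{i-1}}]$, where $A_g = C(C(g))$ is definable and abelian in a stable group; since this condition on the sequence is type-definable (each $N[A_{g_j}]$ being $\bigvee$-definable over the relevant parameters), one may assume $(g_i)_{i<\omega}$ is indiscernible. The ``moreover'' clause of Proposition~\ref{chain} --- whose whole point is that membership in $((A_0\cdots A_{i-1}A_{i+1}\cdots A_n)^G)^{2n}$ is first-order expressible, unlike membership in a product of $\bigvee$-definable normal subgroups --- then yields some $i\le n$ and $m$ for which the formula $\phi(g_i;g_0,\dots,g_{i-1},g_{i+1},\dots,g_n)$ expressing that membership holds. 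By stability, indiscernible sequences are \emph{totally} indiscernible, so the same formula would have to hold of the permuted tuple $(g_n;g_0,\dots,g_{n-1})$; but that instance fails, since $((A_{g_0}\cdots A_{g_{n-1}})^G)^{2n}\subseteq N[A_{g_0}]\cdots N[A_{g_{n-1}}]$ and $g_n^m$ avoids that product by construction. This permutation argument is exactly the asymmetric-to-symmetric upgrade your proof needs and lacks; without it (or some substitute), the greedy recursion does not terminate for the reason you give. A secondary problem: you suggest finding abelian subgroups ``relative to the quotient,'' but $G/N[A_0]\cdots N[A_{i-1}]$ is not a definable group (the $N[A_j]$ are only $\bigvee$-definable), so stable group theory cannot be applied to it directly; the paper avoids this by working with $C(C(g_i))$ inside $G$ itself.
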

{\em Proof.}
It follows from the assumptions that $G$ has a finite burden, say $n$. 
For any $g\in G$ put $A_g=C(C(g))$. Note that $A_g$ is a definable abelian group, containing the group generated by $g$.
Suppose for a contradiction that the conclusion
does not hold. Then, using compactness, we can find inductively a sequence $(g_i)_{i<\omega}$ of elements of $G$ such that
for each $i,m<\omega$, $g_i^m\notin N[A_{g_0}]N[A_{g_1}]\dots N[A_{g_{i-1}}]$ . Since the latter is a 
type-definable condition on the sequence $(g_i)_{i<\omega}$ (as $N[A_{g_0}]N[A_{g_1}]\dots N[A_{g_{i-1}}]$ is
$\bigvee$-definable over $g_0,g_1,\dots,g_{i-1}$), we can additionally assume that $(g_i)_{i<\omega}$ is an indiscernible
sequence. Now, by Proposition \ref{chain}, there is some $i\leq n$ such that 
for some $m<\omega$, $g_i^m\in ((A_0A_1\dots A_{i-1}A_{i+1}A_{i+2}\dots A_n)^G)^{2n}$ (otherwise, putting
$D_i=A_i$ and $E_i=\{g_i^m:m<\omega\}$, we contradict the conclusion of the Proposition).
But this is expressible by a sentence $\phi(g_i;g_0,g_1,\dots, g_{i-1},g_{i+1},g_{i+2},\dots, g_n)$,
and by the choice of the $g_i$'s, the sentence\\ $\phi(g_n;g_0,g_1,\dots,g_{n-1})$ is not true in $G$, so the sequence
$(g_i)_{i<\omega}$ is not totally indiscernible. This contradicts stability.
\hfill $\square$\\

We end by stating a question about relaxing the assumption of stability in the last theorem to some settings which
allow the existence of a definable order:

\begin{question}
 Is the conclusion of Theorem \ref{quotient} true for:\\
 1) rosy groups of finite burden? (in particular, for simple groups of finite weight and groups definable in o-minimal structures?)\\
 2) distal groups of finite burden?
 \end{question}

\noindent
{Jan Dobrowolski\\
School of Mathematics, University of Leeds, United Kingdom\\
e-mail: dobrowol@math.uni.wroc.pl\\\
John Goodrick\\
Departamento de Matem\'aticas, Universidad de los
Andes, Bogot\'a, Colombia\\
e-mail: jr.goodrick427@uniandes.edu.co

\end{document}